\titleformat{\section}{\normalfont\large\bfseries}{\thesection.}{.5em}{}
\numberwithin{equation}{section}
\newtheorem{theorem}[equation]{Theorem}
\newtheorem{lemma}[equation]{Lemma}
\newtheorem{definition}[equation]{Definition}
\newtheorem{remark}[equation]{Remark}
\theoremstyle{nonumberplain}
\newtheorem{proof}{Proof}
\setlist{nosep,leftmargin=\parindent,listparindent=\parindent}
\setlist[itemize]{label=$\vcenter{\hbox{\small$\bullet$}}$}
\renewcommand{\AA}       {{\mathbb{A}}}
\newcommand  {\Aut}      {\mathop{\textup{Aut}}}
\newcommand  {\AUT}      {\mathop{\mathcal{A}\textit{ut}}}
\newcommand  {\et}       {{\textup{et}}}
\newcommand  {\F}        {{\mathcal{F}}}
\newcommand  {\ftimes}[1]{\times_{\!#1}}
\newcommand  {\HH}       {\mathop{\textup{H}}}
\newcommand  {\id}       {{\textup{id}}}
\newcommand  {\Jac}      {\mathop{\textup{Jac}}}
\newcommand  {\M}        {{\mathcal{M}}}
\newcommand  {\PP}       {{\mathbb{P}}}
\newcommand  {\Sch}      {{\textup{Sch}}}
\newcommand  {\Spec}     {\mathop{\textup{Spec}}}
\newcommand  {\U}        {{\mathfrak{U}}}
\begin{document}

\vspace*{2em}
\begin{center}
	{\LARGE\bfseries Ineffective descent of genus one curves} \\[1em]
	Wouter Zomervrucht \\
	\today
\end{center}
\vspace{2em}

\begin{quote}
	\small
	{\bfseries Abstract.} Raynaud proved in 1968 that étale descent of genus one curves is not effective in general. In this paper we provide an alternative, simplified construction of this phenomenon. Our counterexample is fully explicit.
\end{quote}
\vspace{.5em}

\section{Introduction}
\label{sec:intro}

Let $\U = \{U_i : i \in I\}$ be a cover of a scheme $S$, in some topology. A \emph{descent datum of schemes} relative to $\U$ consists of schemes $X_i$ over $U_i$ for all $i \in I$ and isomorphisms $\phi_{ji} \colon X_i \ftimes{U_i} U_{ij} \to X_j \ftimes{U_j} U_{ij}$ over $U_{ij}$ for all $i,j \in I$, satisfying the cocycle condition $\phi_{ki} = \phi_{kj} \phi_{ji}$ on $U_{ijk}$.

In favorable situations the descent datum is \emph{effective}, i.e. descends to a scheme $X$ over $S$. For instance, if $\U$ is a Zariski open cover, descent data are better known as gluing data and always effective. In larger topologies, such as the étale topology, ineffective descent data occur. An example can be found in \cite[03FN]{bib:stacks}.

A natural next step is to determine classes of schemes for which étale descent is effective. In this paper we consider the case of genus one curves. First, let us once and for all fix our notion of a (relative) curve.

\begin{definition}
\label{def:curve}
	Let $S$ be a scheme. A \emph{curve of genus $g$} over $S$ is a proper smooth scheme $X/S$ of relative dimension one, whose geometric fibers are connected curves of genus $g$.
\end{definition}
For genus one curves, the result is negative.

\begin{theorem}
\label{thm:genus_one}
	There exist ineffective étale descent data of genus one curves.
\end{theorem}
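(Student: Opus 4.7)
The plan is to build an explicit counterexample in three stages.

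First, I would fix a connected étale Galois cover $T \to S$ with group $G = \mathbb{Z}/2\mathbb{Z}$; descent data along $\{T \to S\}$ are then the same as $G$-equivariant $T$-schemes whose action covers the involution $\sigma$ of $T/S$, and effectivity amounts to the quotient being a genus one curve over $S$. This reduces the problem from an abstract cover to a concrete equivariance question.

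Second, I would produce the equivariant curve as a torsor. Start from an elliptic curve $E \to S$, pull back to $E_T$, and pick an $E_T$-torsor $X$ whose class $\alpha \in \HH^1(T, E_T)$ is $\sigma$-invariant. Invariance supplies an isomorphism $\phi \colon \sigma^* X \to X$ of $E_T$-torsors; the cocycle condition $\phi \circ \sigma^* \phi = \id$ can be arranged by adjusting $\phi$ by a translation, or by composing with an automorphism of $E$. This is the descent datum, and its effectivity is what I then need to disprove.

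Third, I would show non-effectivity. By fppf descent for algebraic spaces the datum always produces a proper smooth algebraic space $Y \to S$ with genus one fibers, so the claim is that $Y$ is not a scheme. If $Y$ were a scheme, the relative Jacobian $\Jac(Y/S)$ would be an elliptic curve over $S$ pulling back to $E_T$, hence a twist of $E$ by a class in $\HH^1(S, \Aut(E))$, and $Y$ would itself correspond to a class in $\HH^1(S, \Jac(Y/S))$ whose pullback to $T$ recovers $\alpha$. Choosing $S$, $E$, and $\alpha$ so that no such twist and class coexist gives the contradiction.

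The main obstacle is precisely that last step: ruling out all possible twists of $E$ simultaneously with the corresponding candidate classes. This is where the explicitness of the construction should pay off---take $E$ with $\Aut(E) = \{\pm 1\}$ and $S$ so that $\HH^1(S, \mathbb{Z}/2\mathbb{Z})$ is tractable, and write $\alpha$ down by hand so that membership and non-membership in the relevant Weil--Châtelet groups is checkable by direct computation.
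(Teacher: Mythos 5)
There is a genuine gap, and it sits exactly where you locate the ``main obstacle.'' Your third step proposes to derive a contradiction from the non-coexistence of a twist $J$ of $E$ and a class in $\HH^1(S,J)$ pulling back to $\alpha$. But such a twist and class \emph{always} exist: the descent datum on $X$ induces, by functoriality of $\Jac$, a descent datum on $E_T$, which is effective (ample zero section), giving an elliptic curve $E'/S$ with $E'_T \cong E_T$; the descended sheaf $Y$ is then an $E'$-torsor, trivialized over $T$, and so already defines a class $\beta \in \HH^1(S,E')$ with $\beta|_T = \alpha$ --- whether or not $Y$ is a scheme. Étale $\HH^1$ classifies sheaf torsors, not scheme torsors, so ``membership in the relevant Weil--Châtelet group'' cannot detect effectivity. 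What is missing is a criterion that translates representability of a torsor into a cohomological condition, and this is the heart of the matter: the paper invokes Raynaud's theorem that over a \emph{normal local} scheme an étale torsor under an elliptic curve is representable if and only if its class is torsion in $\HH^1(S,E)$ (and that over a regular local base every torsor is representable, which constrains how singular $S$ must be). Without such a criterion, your plan cannot rule out that $Y$ is a scheme.

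Concretely, the paper's route is: reduce Theorem \ref{thm:genus_one} to producing a non-representable torsor; take $S$ to be the localization at the vertex of the affine cone over an elliptic curve $E/k$ (normal, local, not regular); build a degree $2$ finite étale cover $S' \to S$ split over the closed point with $E(U_0)=E(U_1)=E(k)$; then the Mayer--Vietoris sequence for the cover $\{U_0,U_1\}$ of $S'$, together with the projection $U = S\setminus\{s\} \to E$ as a non-torsion element of $E(U)/E(k)$, yields a class of infinite order in $\HH^1(S,\tilde{E})$ for the $-1$-twist $\tilde{E}$ of $E_S$ (note that the natural symmetry condition is \emph{anti}-invariance under $\sigma$, which is why the twist appears, rather than the $\sigma$-invariance you posit). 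Raynaud's criterion then gives non-representability. If you want to salvage your outline, you must (i) replace the ``no class exists'' step by ``the class has infinite order over a normal local base,'' and (ii) choose $S$ singular enough to carry a non-torsion class yet tame enough (normal, local) for the criterion to apply; a tractable $\HH^1(S,\mathbb{Z}/2\mathbb{Z})$ and $\Aut(E)=\{\pm 1\}$ do not by themselves address either point.
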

This theorem is due to Raynaud \cite[XIII 3.2]{bib:raynaud}. The aim of the current paper is to provide a simplified counterexample.

In the case of genus $g \neq 1$ curves, it is well-known that étale, or even fpqc, descent is effective. Indeed, outside genus one the canonical bundle (or its dual) is ample, and the descent comes from descent of quasi-coherent sheaves. See e.g. \cite[4.39]{bib:vistoli} for more details. On genus one curves the canonical bundle is fiberwise trivial and the argument does not apply. Note however that for \emph{elliptic curves} the zero section provides an ample line bundle, and fpqc descent is again effective.

Theorem \ref{thm:genus_one} can also be interpreted as follows. Let $\F$ be the fibered category over $\Sch$ that assigns to a scheme $S$ the groupoid $\F(S)$ of genus one curves over $S$. Then $\F$ is not an étale stack. Instead one works with the fibered category $\M_1$ where $\M_1(S)$ is the groupoid of proper smooth algebraic spaces over $S$ whose geometric fibers are genus one curves. Now $\M_1$ is an étale (even fppf) stack; it is the fppf stackification of $\F$.

\medskip \noindent
\textbf{Organization.}
The next section reduces the proof of theorem \ref{thm:genus_one} to one concerning torsors under elliptic curves. Section \ref{sec:construction} contains the actual construction. In section \ref{sec:raynaud} we compare it with the original counterexample by Raynaud.

\medskip \noindent
\textbf{Acknowledgements.}
The research in this paper is part of the author's master's thesis. I would like to thank Lenny Taelman and Bas Edixhoven for their valuable contributions.

\section{Torsors}
\label{sec:torsors}

Attached to a genus one curve $X/S$ is its Jacobian $E = \Jac(X/S)$. It is an elliptic curve over $S$, endowed with a natural action on $X$ that makes $X$ into an étale $E$-torsor.

Now let $\U$ be an étale cover of $S$. A descent datum of genus one curves relative to $\U$ descends to a (not necessarily representable) sheaf of sets $X$ on $S$. Also, by functoriality of $\Jac$ we obtain a descent datum of elliptic curves relative to $\U$. The latter descends to an elliptic curve $E/S$. Again the natural action of $E$ on $X$ makes $X$ into an étale $E$-torsor.

Conversely, any étale $E$-torsor gives rise to a descent datum of genus one curves relative to some étale cover. So the problem at hand is really to find a non-representable torsor under some elliptic curve. The following theorem from \cite[XIII 2.6]{bib:raynaud} will be useful.

\begin{theorem}
\label{thm:raynaud}
	Let $S$ be a local scheme and $E/S$ an elliptic curve.
	\begin{itemize}
		\item If $S$ is normal, an étale $E$-torsor is representable if and only if it has finite order in $\HH^1(S,E)$.
		\item If $S$ is regular, all étale $E$-torsors are representable.
	\end{itemize}
\end{theorem}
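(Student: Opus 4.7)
The plan is to handle the two bullets separately, reducing the second to the first. For the ``if'' direction of the first bullet, suppose $n \cdot [X] = 0$. The fppf multiplication-by-$n$ short exact sequence
\[
0 \to E[n] \to E \xrightarrow{[n]} E \to 0
\]
gives a long exact sequence in cohomology, from which $[X]$ lifts to a class in $\HH^1(S, E[n])$, represented by a finite flat $E[n]$-torsor $T$ over $S$. Then $X$ is recovered as the contracted product $(T \times_S E)/E[n]$. Since $T$ is finite over $S$ and $E$ is projective over the local base $S$, the product $T \times_S E$ is projective over $S$, and the quotient by a free finite flat group scheme action exists as a projective scheme over $S$. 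This realises $X$ as a scheme.

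For the ``only if'' direction of the first bullet, assume $X$ is representable. Abel--Jacobi identifies $X$ with $\mathrm{Pic}^1_{X/S}$ as $E$-torsors, and more generally $n[X] = [\mathrm{Pic}^n_{X/S}]$ for $n \geq 0$, so it suffices to exhibit, for some $n > 0$, a line bundle on $X$ of relative degree $n$; its class then gives a section trivialising the torsor. On the closed fiber $X_s$, which is a projective curve of genus one over the residue field, line bundles of arbitrary positive degree certainly exist (for instance $\mathcal{O}(p)$ for any closed point $p$). The main obstacle, and the step for which normality of $S$ is essential, is extending such a line bundle from $X_s$ to $X$. I would attack this via Grothendieck's representability of the relative Picard functor together with a deformation argument valid over the normal local base.

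For the second bullet, by the first it suffices to show $\HH^1(S, E)$ is torsion when $S$ is regular local. Let $\eta = \Spec K$ be the generic point. The group $\HH^1(K, E_\eta)$ is the Weil--Châtelet group, which is torsion by the standard Galois cohomology of abelian varieties over fields. Hence it remains to show the restriction $\HH^1(S, E) \to \HH^1(K, E_\eta)$ is injective, i.e. that a generically trivial étale $E$-torsor is globally trivial. A rational section extends over all codimension-one points of $S$ by the valuative criterion of properness, and a purity-type argument for smooth group schemes over regular schemes then extends it across all of $S$. This is the step where regularity of $S$ is crucial.
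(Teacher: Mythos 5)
First, a point of order: the paper does not prove this statement at all — it is quoted verbatim from Raynaud \cite[XIII 2.6]{bib:raynaud} and used as a black box in sections \ref{sec:construction} and \ref{sec:raynaud} — so there is no in-paper proof to compare yours against; I can only assess your argument on its own terms. Your overall skeleton is the standard one, and the ``if'' direction of the first bullet is essentially complete: lifting a torsion class through the Kummer sequence to an fppf $E[n]$-torsor $T$ (representable, finite flat, since $E[n]$ is finite and affine morphisms descend), and forming the contracted product $(T \times_S E)/E[n]$, which exists as a scheme because $T \times_S E$ is projective over the local base so that orbits lie in affine opens. Note, however, that this direction uses no normality at all, which should alert you to where the real content lies.

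The genuine gaps are in the two remaining steps, which are precisely the ones carrying the hypotheses. For ``representable $\Rightarrow$ torsion'' you correctly reduce to producing a line bundle of nonzero relative degree on $X$, but then the argument stops: ``Grothendieck's representability of the relative Picard functor together with a deformation argument'' is a declaration of intent, not a proof. Deformation theory does extend a degree-$n$ bundle on $X_s$ infinitesimally (the obstruction space $\HH^2(X_s,\mathcal{O}_{X_s})$ vanishes for a curve) and Grothendieck existence algebraizes it over the completion $\hat{S}$, but that only shows the class is torsion after base change to $\hat{S}$; you say nothing about descending that conclusion to $S$, and nothing in your sketch actually uses normality — yet the equivalence is false without it, so any correct proof must make that hypothesis do work at an identifiable point. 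For the second bullet, the reduction to torsionness of $\HH^1(S,E)$ and the torsionness of the Weil--Châtelet group $\HH^1(K,E_\eta)$ are fine, but the injectivity of $\HH^1(S,E) \to \HH^1(K,E_\eta)$ is the entire content of the regular case and is only gestured at: the ``valuative criterion'' step presupposes that the torsor restricted to the one-dimensional local rings $\mathcal{O}_{S,x}$ is a proper \emph{scheme} (or invokes the Néron property), i.e. exactly the kind of representability statement under discussion, so as written the argument is circular; and the ``purity-type argument for smooth group schemes'' extending a section from an open with codimension-$\geq 2$ complement is a real theorem (of Weil extension type) that needs to be stated and justified, not assumed. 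As it stands, both directions that require the hypotheses on $S$ are unproven.
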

Here and further on, $\HH^1(S,E)$ always denotes sheaf cohomology on $(\Sch/S)_\et$. Recall that \emph{normal} means all local rings of $S$ are integrally closed domains.

\section{Construction}
\label{sec:construction}

We shall now construct a noetherian normal local scheme $S$, an elliptic curve $\tilde{E}/S$, and a class $\tau \in \HH^1(S,\tilde{E})$ of infinite order. Necessarily $S$ is irregular, so not of dimension $0$ or $1$.

Let $k$ be a field of characteristic not $2$. Let $E/k$ be an elliptic curve with a given embedding in $\PP^2$. Let $C \subset \AA^3$ be the affine cone over $E$. Let $s \in C$ be the top of the cone, and $S$ the localization of $C$ at $s$. It is normal by Serre's criterion \cite[23.8]{bib:matsumura} since $S$ is a complete intersection with its singularity in codimension $2$.

\begin{lemma}
\label{lem:cover}
	There exists a finite étale cover $\pi \colon S' \to S$ of degree $2$ whose fiber over $s$ consists of two distinct $k$-rational points $s_0,s_1$, such that $E(S' \setminus \{s_1\}) = E(S' \setminus \{s_0\}) = E(k)$.
\end{lemma}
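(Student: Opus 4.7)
The plan is to construct $\pi$ explicitly by adjoining a square root, and then prove the rigidity by showing that $S' \setminus \{s_1\}$ is swept out by copies of $\AA^1$ through $s_0$, on each of which any map to $E$ is forced to be constant.

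For the construction, I would take
\[ S' = \Spec\bigl(\mathcal{O}_{C,s}[t]/(t^2 - 1 - x)\bigr), \]
where $x$ is one of the three coordinates on $C \subset \AA^3$. Since $\mathrm{char}\, k \neq 2$ and $1 + x \in \mathcal{O}_{C,s}^\times$ is a unit with value $1$ at $s$, the relation $t^2 = 1 + x$ makes $t$, and hence $2t$, a unit; so $\pi$ is finite étale of degree $2$. The fiber over $s$ factors as $(t - 1)(t + 1)$, yielding two $k$-rational points $s_0$ (at $t = 1$) and $s_1$ (at $t = -1$).

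For the rigidity, I would use a \emph{line through the vertex} argument. For each geometric point $p = [a : b : c] \in E(\bar k)$ with $a \neq 0$, the line $L_p \subset C_{\bar k}$ through the vertex in direction $p$, parameterized by $\tau \mapsto (\tau a, \tau b, \tau c)$, has preimage in the global double cover cut out by $t^2 - 1 - a\tau$; the substitution $\tau = (t^2 - 1)/a$ identifies this preimage with $\AA^1_{\bar k}$, with the preimages of the vertex located at $t = \pm 1$. Because the condition $a \neq 0$ defines a dense open in $E$, the preimages of such $L_p$ (pulled back to $S'$) cover a dense open of $S' \setminus \{s_1\}$.

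Given a morphism $\phi \colon S' \setminus \{s_1\} \to E$, its restriction to each $\AA^1_{\bar k} \setminus \{t = -1\}$ extends by properness of $E$ and the valuative criterion to a morphism $\PP^1_{\bar k} \to E_{\bar k}$, which is constant, with value equal to $\phi(s_0)$ since $s_0$ lies on every such line. Density of the sweep, reducedness of $S'$, and separatedness of $E$ then force $\phi$ to be globally constant equal to $\phi(s_0) \in E(k)$. This gives $E(S' \setminus \{s_1\}) = E(k)$, and the $s_0$-case is symmetric. The main obstacle would be producing a family of $\AA^1$'s through $s_0$ that sweep out $S'$ densely with rational preimages; the linear choice $f = 1 + x$ bypasses this cleanly because $t^2 = 1 + a\tau$ is always a smooth rational curve in $(\tau, t)$.
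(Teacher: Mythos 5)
Your proposal is correct and follows essentially the same route as the paper: the cover is the identical one obtained by adjoining $\sqrt{1+x}$, and the rigidity is proved by the same argument that the preimage of each ray through the vertex with $a \neq 0$ is a rational curve through $s_0$, on which a map to $E$ must be constant, combined with density of these rays after base change to $\bar{k}$. The only (cosmetic) difference is that you make the rationality explicit via the parameterization $\tau = (t^2-1)/a$ and extension to $\PP^1$, whereas the paper argues with the residue field of the generic point $\eta'$ of that preimage being a rational function field.
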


\begin{proof}
	Let $E \subset \PP^2$ be given by some cubic $f \in k[x,y,z]$. The ring of $S$ is the integral domain $R = k[x,y,z]_{(x,y,z)} / (f)$. Since $2$ is invertible in $k$, a degree $2$ finite étale cover of $S$ may be constructed by adjoining to $R$ the square root of a unit $u \in R^\times$. Set $R' = R[t]/(t^2-u)$ and $S' = \Spec{R'}$. Then $\pi \colon S' \to S$ is split above $s$ if $u(s) \in \kappa(s)$ is a square. We choose $u = 1+x$.
	
	For the second part it suffices to prove that all $k$-morphisms $\alpha \colon S' \setminus \{s_1\} \to E$ are constant. Take a point $(a:b:c) \in E(k)$. Let $L \subset C$ be the corresponding ray, and $\eta \in S$ the generic point of $L$. As long as $a$ is non-zero, $u$ is not a square at $\eta$. Then the fiber of $\pi$ at $\eta$ is a single point $\eta'$ with rational function field. Therefore, $\alpha$ must send $\eta'$ to some closed point $p \in E$. By continuity we have $\alpha(s_0) = p$ as well. After passage to an algebraic closure of $k$, the points $\eta'$ as above lie dense in $S' \setminus \{s_1\}$. So $\alpha$ maps a dense subset of $S' \setminus \{s_1\}$ to $p$. Since $p$ is closed, $\alpha$ is constant.
\end{proof}

\begin{remark}
\label{rmk:char}
	The preceding lemma is still true in characteristic $2$, where one may construct a suitable cover by means of an Artin--Schreier extension. Therefore, the restriction on $k$ is not necessary; it is imposed only to simplify the exposition.
\end{remark}
Write $U = S \setminus \{s\}$, $U_0 = S' \setminus \{s_1\}$, $U_1 = S' \setminus \{s_0\}$, and $U_{01} = U_0 \cap U_1$. Then $\U = \{U_0,U_1\}$ is an open cover of $S'$. The associated first Čech cohomology is given by the Mayer--Vietoris exact sequence
\begin{equation}
\label{eq:cech}
	\begin{tikzcd}[column sep=small]
		0						\arrow{r}	&
		E(S')					\arrow{r}	&
		E(U_0) \times E(U_1)		\arrow{r}	&
		E(U_{01})				\arrow{r}	&
		\HH^1(\U,E)				\arrow{r}	&
		0.
	\end{tikzcd}
\end{equation}
By construction we have $E(S') = E(U_0) = E(U_1) = E(k)$. So \eqref{eq:cech} reduces to
\begin{displaymath}
	\begin{tikzcd}[column sep=small]
		0						\arrow{r}	&
		E(k)						\arrow{r}	&
		E(U_{01})				\arrow{r}	&
		\HH^1(\U,E)				\arrow{r}	&
		0.
	\end{tikzcd}
\end{displaymath}
The Galois group $G = \Aut(S'/S)$ acts on this sequence: the involution $\sigma \in G$ acts on $E(k)$ by inversion, on $E(U_{01})$ by $a \mapsto -\sigma^*a$, and on $\HH^1(\U,E)$ by $[X] \mapsto [\sigma^{-1}X]$. In fact this action comes from the natural $G$-action on \eqref{eq:cech}. Taking anti-invariants yields
\begin{displaymath}
	\begin{tikzcd}[column sep=small]
		0						\arrow{r}	&
		E(k)						\arrow{r}	&
		E(U)						\arrow{r}	&
		\HH^1(\U,E)^{-\sigma}
	\end{tikzcd}
\end{displaymath}
where for any $G$-module $M$ we denote by $M^{-\sigma} = \{m \in M : \sigma m = -m\}$ its subgroup of anti-invariants.

Let $\tilde{E} = S' \otimes_{\AUT(S'/S)} E_S$ be the $-1$-twist of $E_S$ along $\pi$. In other words, $\tilde{E}$ is the quotient of $S' \times E_S$ by $\AUT(S'/S)$, where $\sigma$ acts as $(x,a) \mapsto (\sigma x,-a)$. Then $\tilde{E}$ is an elliptic curve over $S$ with a canonical $S'$-isomorphism $E_{S'} \cong \tilde{E}_{S'}$.

\begin{lemma}
\label{lem:cohom}
	There is a natural map $\HH^1(S',E)^{-\sigma} \to \HH^1(S,\tilde{E})$ whose kernel is $2$-torsion.
\end{lemma}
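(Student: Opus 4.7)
The plan is to realize the desired map as the corestriction (trace) attached to the finite étale Galois cover $\pi\colon S' \to S$ of degree $2$, and to compute its kernel via the standard identity $\pi^* \circ \pi_* = \sum_{g\in G} g^*$ for $G = \Aut(S'/S) = \{1,\sigma\}$.

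The preliminary observation is an identification of the relevant $G$-action. There is a canonical isomorphism $\pi^*\tilde{E} \cong E_{S'}$ of sheaves on $S'$, but the $G$-equivariant structure on $\pi^*\tilde{E}$ (coming from $\pi$ being Galois) differs from the naive one on $E_{S'}$ (coming from $E$ being defined over $S$) by the inversion $[-1] \colon E \to E$, because $\tilde{E}$ was built as the $-1$-twist of $E_S$. Consequently the induced $G$-action on $\HH^1(S',\pi^*\tilde{E}) = \HH^1(S',E)$ is $[\xi] \mapsto -\sigma^*[\xi]$, where $\sigma^*$ denotes the natural action, so the $G$-invariants for this twisted action are precisely $\HH^1(S',E)^{-\sigma}$.

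Next I would define the map. Since $\pi$ is finite étale, higher direct images vanish and the trace of sheaves $\pi_*\pi^*\tilde{E} \to \tilde{E}$ yields a corestriction on cohomology
\begin{displaymath}
  \pi_* \colon \HH^1(S',E) = \HH^1(S,\pi_*\pi^*\tilde{E}) \longrightarrow \HH^1(S,\tilde{E}).
\end{displaymath}
The map of the lemma is the restriction of $\pi_*$ to $\HH^1(S',E)^{-\sigma}$. For the kernel, I would apply $\pi^* \circ \pi_* = \sum_{g\in G} g^*$, which with the twisted action becomes $1 - \sigma^*$ on $\HH^1(S',E)$; on the anti-invariants, where $\sigma^* = -1$, this is multiplication by $2$. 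Hence any $\xi \in \HH^1(S',E)^{-\sigma}$ with $\pi_*\xi = 0$ satisfies $2\xi = \pi^*\pi_*\xi = 0$, as required.

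The only delicate point is the first step, where one must carefully track how the quadratic twist converts the natural Galois action on $\HH^1(S',E)$ into the anti-invariance condition. Once the $G$-equivariant structures are identified correctly, the kernel computation is the standard restriction–corestriction formalism for a degree-two Galois cover.
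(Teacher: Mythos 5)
Your proposal is correct and is essentially the paper's own argument in different packaging: your corestriction via the trace $\pi_*\pi^*\tilde{E} \to \tilde{E}$ is exactly the paper's map induced by $\id - \sigma \colon A \to \tilde{E}$ on the Weil restriction $A = \pi_*E_{S'}$ (combined with the same Leray-type isomorphism $\HH^1(S',E) \cong \HH^1(S,A)$ from SGA~4), and your identity $\pi^*\pi_* = \sum_{g \in G} g^* = 1 - \sigma^*$ is the paper's observation that the composite $\HH^1(S,A) \to \HH^1(S,\tilde{E}) \to \HH^1(S,A)$ is $\id - \sigma$, hence multiplication by $2$ on anti-invariants. The careful tracking of the twisted equivariant structure on $\pi^*\tilde{E} \cong E_{S'}$ is the right delicate point and you handle it correctly.
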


\begin{proof}
	Let $A = \pi_* E_{S'}$ be the Weil restriction of $E_{S'}$ to $S$. By \cite[VIII 5.6]{bib:sga4-2} the pushforward map $\pi_* \colon \HH^1(S',E) \to \HH^1(S,A)$ is an isomorphism. It is also equivariant for the natural $G$-action on $\HH^1(S,A)$, so $\pi_*$ restricts to an isomorphism $\HH^1(S',E)^{-\sigma} \to \HH^1(S,A)^{-\sigma}$.
	
	We have $\tilde{E} = A^{-\sigma}$, or more precisely $\tilde{E}(T) = A(T)^{-\sigma}$ for all schemes $T/S$. Consider the map $\id - \sigma \colon A \to \tilde{E}$ and the inclusion $\tilde{E} \to A$. The induced composition
	\begin{displaymath}
		\begin{tikzcd}[column sep=small]
			\HH^1(S,A)				\arrow{r}	&
			\HH^1(S,\tilde{E})		\arrow{r}	&
			\HH^1(S,A).
		\end{tikzcd}
	\end{displaymath}
	is again $\id - \sigma$. Restricted to $\HH^1(S,A)^{-\sigma}$ this is simply multiplication by $2$. Hence the kernel of $\HH^1(S,A)^{-\sigma} \to \HH^1(S,\tilde{E})$ is $2$-torsion.
\end{proof}
The map $U \to C \setminus \{s\} \to E$ is an element of $E(U)$, no multiple of which is constant. Via the injection $E(U)/E(k) \to \HH^1(S',E)^{-\sigma}$ and lemma \ref{lem:cohom} we obtain a non-torsion class $\tau \in \HH^1(S,\tilde{E})$. The corresponding $\tilde{E}$-torsor on $S$ is not representable by theorem \ref{thm:raynaud}. As we have explained in section \ref{sec:torsors}, this proves theorem \ref{thm:genus_one}.

\section{Raynaud's approach}
\label{sec:raynaud}

In this section we briefly compare our construction with that by Raynaud in \cite[XIII 3.2]{bib:raynaud}. We sketch his approach.

Let $R$ be a discrete valuation ring in which $2$ is invertible. (As before, the characteristic condition is imposed only to simplify the exposition.) Let $E/R$ be an elliptic curve, and let $V \subset E$ be the complement of the zero section.

\begin{lemma}
\label{lem:normal}
	There exist a normal scheme $Z$ over $R$ and an $R$-morphism $f \colon V \to Z$ that is an isomorphism on the generic fiber and constant on the special fiber.
\end{lemma}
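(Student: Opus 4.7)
The plan is to construct $Z$ as a rescaled Weierstrass model of $E$, with coordinates dilated by suitable powers of a uniformizer $\pi \in R$ so that the special fiber degenerates to a cuspidal cubic into whose cusp all of $V_s$ is mapped.

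Since $2 \in R^\times$, I would pick a Weierstrass equation $y^2 = x^3 + ax + b$ for $E/R$ (the char $3$ case requires the general form $y^2 = x^3 + a_2 x^2 + a_4 x + a_6$, but is otherwise analogous). Then $V = \Spec A$ with $A = R[x,y]/(y^2 - x^3 - ax - b)$. I would take
\[
	Z = \Spec R[X,Y]\bigl/\bigl(Y^2 - X^3 - a\pi^4 X - b\pi^6\bigr)
\]
and define $f \colon V \to Z$ via $X \mapsto \pi^2 x$, $Y \mapsto \pi^3 y$. The defining relation of $Z$ is exactly the Weierstrass relation of $E$ pulled back under this substitution and multiplied by $\pi^6$, so $f$ is a well-defined $R$-morphism. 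The first two properties are now immediate: on the generic fiber $\pi$ is a unit, so $(X,Y) = (\pi^2 x, \pi^3 y)$ is an invertible change of variables and $f_K$ is an isomorphism; on the special fiber both $\pi^2 x$ and $\pi^3 y$ reduce to zero, so $V_s$ is sent to the single point $(X,Y) = (0,0)$ of $Z_s$.

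The main obstacle is normality of $Z$. As a hypersurface in the regular $3$-dimensional scheme $\Spec R[X,Y]$ cut out by a non-zero-divisor, $Z$ is Cohen--Macaulay, and hence satisfies Serre's condition $S_2$. For $R_1$ I would compute the Jacobian of the defining relation in the variables $X, Y, \pi$ and check it is nonzero away from the single closed point $X = Y = \pi = 0$: indeed on the cuspidal special fiber $Y^2 = X^3$ any non-origin point has $Y \neq 0$, so $\partial/\partial Y = 2Y$ is already nonzero there, while on the generic fiber $Z$ is isomorphic to the smooth curve $E_K \setminus \{0\}$. Away from the origin, smoothness of $E$ (i.e., non-vanishing of the Weierstrass discriminant in $R$) forces the Jacobian to be nonzero. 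Thus $Z$ has a single isolated (elliptic) singularity in codimension $2$, and Serre's criterion delivers normality.

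The real content of the construction is the choice of rescaling exponents $(2,3)$: these are exactly the pole orders of $x$ and $y$ at the zero section of $E$, and any other choice would either destroy the generic-fiber isomorphism or fail to produce a hypersurface with merely an isolated singularity in the ambient regular threefold.
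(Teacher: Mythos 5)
Your proposal is correct and follows essentially the same route as the paper: rescale the Weierstrass coordinates by $t^2,t^3$ so that $Z$ is the spectrum of the rescaled Weierstrass ring, note the map is an isomorphism where $t$ is invertible and collapses the special fiber to the origin of the cuspidal cubic, and conclude normality from Serre's criterion since the hypersurface $Z$ is regular in codimension $1$. The only cosmetic difference is that the paper works with the equation $y^2 = x^3 + ax^2 + bx + c$ (so only $2 \in R^\times$ is needed, avoiding the characteristic $3$ caveat you mention) and simply states the Serre-criterion step that you spell out in more detail.
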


\begin{proof}
	(This proof is different from Raynaud's in \cite[XIII 3.2 b]{bib:raynaud}.) Embedding $E$ in the projective plane, $V$ is isomorphic to the spectrum of $R[x,y] / (y^2 - x^3 - ax^2 - bx -c)$ for suitable $a,b,c \in R$. Let $t \in R$ be a uniformizer. Let $Z$ be the spectrum of $R[u,v] / (v^2 - u^3 - t^2au^2 - t^4bu - t^6c)$ and define $f \colon V \to Z$ on rings by $u \mapsto t^2x$, $v \mapsto t^3y$. Then $Z$ is regular in codimension $1$, hence normal \cite[23.8]{bib:matsumura}. On the generic fiber, $t$ is a unit so $f$ is an isomorphism. On the special fiber $f$ is constant with image $(0,0)$.
\end{proof}
Let $s \in Z$ be the image of $f$ on the special fiber. Let $S$ be the localization of $Z$ at $s$. It is normal by construction.

We may choose $R$ such that it admits a connected finite étale cover $\Spec{R'} \to \Spec{R}$ of degree $2$ that is split on the special fiber. Set $S' = S_{R'}$, then $\pi \colon S' \to S$ is finite étale of degree $2$ as well. Let $s_0,s_1$ be the lifts of $s$ to $S'$. Write $U = S \setminus \{s\}$, $U_0 = S' \setminus \{s_1\}$, $U_1 = S' \setminus \{s_0\}$, and $U_{01} = U_0 \cap U_1$.

\begin{lemma}
\label{lem:maps}
	$E(U_0) = E(U_1) = E(R')$.
\end{lemma}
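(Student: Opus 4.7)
The plan is to reduce by symmetry to $E(U_0) = E(R')$; the inclusion $E(R') \hookrightarrow E(U_0)$ via pullback is obvious, so I will take $\alpha \in E(U_0)$ and aim to show it factors through $U_0 \to \Spec R'$.

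The core of my argument will be the following construction. I will consider the base change $\tilde f \colon V_{R'_{\mathfrak{m}_0}} \to Z_{R'_{\mathfrak{m}_0}}$ of $f$ (where $R'_{\mathfrak{m}_0}$ is the DVR obtained by localizing $R'$ at the maximal ideal $\mathfrak{m}_0$ under $s_0$) and form the fiber product $W = V_{R'_{\mathfrak{m}_0}} \times_{Z_{R'_{\mathfrak{m}_0}}} \Spec S_{R'_{\mathfrak{m}_0}}$. I will verify that $W$ is smooth over $R'_{\mathfrak{m}_0}$: it is flat over $V_{R'_{\mathfrak{m}_0}}$ (base change of the flat monomorphism $\Spec S_{R'_{\mathfrak{m}_0}} \to Z_{R'_{\mathfrak{m}_0}}$), its generic fiber $\Spec S_{K'}$ is a localization of the smooth curve $V_{K'}$ at closed points, and its special fiber equals all of $V_{\kappa_R}$ (because $\tilde f_{\kappa_R}$ collapses $V_{\kappa_R}$ to $s_0$). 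Setting $\beta = \alpha \circ \tilde f|_W \colon W \to E_{R'_{\mathfrak{m}_0}}$, the special fiber $\beta_{\kappa_R}$ will be the constant map with value $p_0 := \alpha(s_0)$.

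Next, I will invoke the Néron mapping property: since $E_{R'_{\mathfrak{m}_0}}$ is its own Néron model (good reduction) and $W$ is smooth, $\beta$ is determined by its generic fiber $\beta_{K'} \colon W_{K'} \to E_{K'}$. Since $W_{K'}$ is dense in the smooth proper curve $E_{K'}$, $\beta_{K'}$ extends to a morphism $\tilde\beta_{K'} \colon E_{K'} \to E_{K'}$, which lifts by Néron to $\tilde\beta \colon E_{R'_{\mathfrak{m}_0}} \to E_{R'_{\mathfrak{m}_0}}$ restricting to $\beta$ on $W$. Continuity will force $\tilde\beta_{\kappa_R} \equiv p_0$ on all of $E_{\kappa_R}$. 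Setting $c := \tilde\beta(0)$ and $\phi := \tilde\beta - c$, the latter will be a group homomorphism by rigidity of abelian schemes, with $\phi_{\kappa_R} = 0$; by injectivity of the reduction map on endomorphisms of elliptic curves with good reduction, $\phi = 0$, so $\tilde\beta = c$ is constant.

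To finish, I will observe that $\alpha$ agrees with $c \circ \pi$ on the dense subset $\tilde f(W) \subset \Spec S_{R'_{\mathfrak{m}_0}}$ (which contains the generic point), hence on all of $\Spec S_{R'_{\mathfrak{m}_0}}$ by separatedness of $E$. Extending $c$ uniquely to $\tilde c \in E(R') = E(K')$ via the valuative criterion at $\mathfrak{m}_1$ and applying separatedness once more on the dense open $\Spec S_{R'_{\mathfrak{m}_0}} \subset U_0$ will give $\alpha = \tilde c \circ \pi \in E(R')$. I expect the main obstacle to be verifying the smoothness of $W$ over $R'_{\mathfrak{m}_0}$ (since $\tilde f$ itself is not flat) and then carefully weaving together Néron, rigidity, and reduction-injectivity on endomorphisms in the relative setting in order to collapse $\tilde\beta$ to a constant.
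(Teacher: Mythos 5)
Your argument is correct and follows essentially the same route as the paper's proof: precompose $\alpha$ with $f$ to get a self-map of (a localization of) $E$ over the local base, extend it via the generic fibre and the Néron mapping property of $E$, and use constancy of $f$ on the special fibre together with rigidity to force the extended map to be a constant section, whence $\alpha$ factors through the base. The only quibble is that your $W$ is a pro-open localization of $V_{R'_{\mathfrak{m}_0}}$, hence not of finite type and not literally smooth over $R'_{\mathfrak{m}_0}$ — but this is harmless, since you apply the Néron property only to $E_{R'_{\mathfrak{m}_0}}$ itself and for $W$ you merely need that it is reduced with dense generic fibre.
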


\begin{proof}
	Throughout the proof, primes indicate base change along $R \to R'$, e.g. $E' = E_{R'}$. By symmetry, it suffices to prove that all $R'$-morphisms $\alpha \colon U_0 \to E'$ factor over $\Spec{R'}$.
	
	 Note that $\alpha$ extends to an $R'$-morphism $W \to E'$ for some open $W \subseteq Z'$ containing $U_0$. Consider the $R'$-rational map
	\begin{displaymath}
		\begin{tikzcd}[column sep=small]
			h \colon E'			\arrow[dashed]{r}	&
			V'					\arrow{r}			&
			Z'					\arrow[dashed]{r}	&
			W					\arrow{r}			&
			E'.
		\end{tikzcd}
	\end{displaymath}
	On the generic fiber, $h$ is a rational map of elliptic curves over a field, hence extends to a morphism. Since $E'$ is the Néron model of its generic fiber \cite[1.2.8]{bib:neron}, $h$ actually extends to an $R'$-morphism $E' \to E'$. Note that $h$ is constant on the fiber over the closed point of $\Spec{R'}$ under $s_0$. Hence $h$ comes from $E(R')$ by rigidity \cite[6.1]{bib:git} because $\Spec{R'}$ is connected.
\end{proof}
Let $\U$ be the open cover $\{U_0,U_1\}$ of $S'$. The Mayer--Vietoris sequence \eqref{eq:cech} now reduces to
\begin{displaymath}
	\begin{tikzcd}[column sep=small]
		0						\arrow{r}	&
		E(R')					\arrow{r}	&
		E(U_{01})				\arrow{r}	&
		\HH^1(\U,E)				\arrow{r}	&
		0
	\end{tikzcd}
\end{displaymath}
and taking anti-invariants for the action of the involution $\sigma \in \Aut(S'/S)$ yields
\begin{displaymath}
	\begin{tikzcd}[column sep=small]
		0						\arrow{r}	&
		E(R)						\arrow{r}	&
		E(U)						\arrow{r}	&
		\HH^1(\U,E)^{-\sigma}.
	\end{tikzcd}
\end{displaymath}
Let $\tilde{E}$ be the $-1$-twist of $E_S$ along $\pi$. As in lemma \ref{lem:cohom} we have a map $\HH^1(S',E)^{-\sigma} \to \HH^1(S,\tilde{E})$ with $2$-torsion kernel.

Let $\eta$ be the generic point of $\Spec{R}$. We have a map $U_\eta \to Z_\eta \to E_\eta$ by inverting $f_\eta$. Since $U$ is normal of dimension $1$ and $E$ is proper, it extends uniquely to a map $U \to E$. This map is an element of $E(U)$, no multiple of which comes from $E(R)$. We obtain a non-torsion class $\tau \in \HH^1(S,\tilde{E})$. By theorem \ref{thm:raynaud} it corresponds to a non-representable $\tilde{E}$-torsor on $S$, proving theorem \ref{thm:genus_one}.

\begin{remark}
\label{rem:raynaud}
	It is worthwile to observe that Raynaud proves slightly less. He constructs a non-torsion element in $\HH^1(S',E)$ as above. Let $A = \pi_*E_{S'}$ be the Weil restriction of $E_{S'}$ to $S$. The pushforward map $\pi_* \colon \HH^1(S',E) \to \HH^1(S,A)$ is an isomorphism by \cite[VIII 5.6]{bib:sga4-2}, so we find a non-torsion element $\gamma \in \HH^1(S,A)$. Consider the short exact sequence
	\begin{displaymath}
		\begin{tikzcd}[column sep=small]
			0				\arrow{r}	&
			E_S				\arrow{r}	&
			A				\arrow{r}	&
			\tilde{E}		\arrow{r}	&
			0
		\end{tikzcd}
	\end{displaymath}
	of abelian schemes over $S$. In the long exact sequence of cohomology, either $\gamma$ maps to a non-torsion class in $\HH^1(S,\tilde{E})$, or $\gamma$ lifts to a non-torsion class in $\HH^1(S,E)$. This proves that there exists a non-representable torsor under either $\tilde{E}$ or $E_S$, hence theorem \ref{thm:genus_one}. However, this shorter proof does not permit us to explicitly write down a counterexample.
\end{remark}


\begin{thebibliography}{0}
	\bibitem{bib:sga4-2}
		M. Artin, A. Grothendieck, J.L. Verdier, \emph{Théorie des topos et cohomologie étale des schémas (SGA~4), Tome 2}. Lecture Notes in Mathematics 270, Springer, 1972.
	\bibitem{bib:neron}
		S. Bosch, W. Lütkebohmert, M. Raynaud, \emph{Néron models}. Ergebnisse der Mathematik und ihrer Grenzgebiete (3rd series) 21, Springer, 1990.
	\bibitem{bib:matsumura}
		H. Matsumura, \emph{Commutative ring theory}. Cambridge Studies in Advanced Mathematics 8, Cambridge University Press, 1986.
	\bibitem{bib:git}
		D. Mumford, J. Fogarty, F. Kirwan, \emph{Geometric invariant theory}. Ergebnisse der Mathematik und ihrer Grenzgebiete (2nd series) 34, 3rd ed., Springer, 1994.
	\bibitem{bib:raynaud}
		M. Raynaud, \emph{Faisceaux amples sur les schémas en groupes et les espaces homogènes}. Lecture Notes in Mathematics 119, Springer, 1970.
	\bibitem{bib:stacks}
		\emph{The Stacks Project}. Available at \texttt{http://stacks.math.columbia.edu}, 2015.
	\bibitem{bib:vistoli}
		A. Vistoli, \emph{Grothendieck topologies, fibered categories and descent theory}. In: Fundamental Algebraic Geometry: Grothendieck's FGA Explained, pp. 1--104, Mathematical Surveys and Monographs 123, American Mathematical Society, 2005.
\end{thebibliography}
\end{document}